\title{Repeated binomial coefficients and high-degree curves}
\author{Hugo Jenkins}
\begin{document}
\maketitle
\begin{abstract}
We consider the problem of characterizing solutions in $(x, y)$ to the equation ${x \choose y}={{x-a} \choose {y+b}}$ in terms of $a$ and $b$. We obtain one simple result which allows the determination of a ratio in terms of $a$ and $b$ which the ratio $\frac{x}{y}$ must approximate. We then add to the understanding of the infinite family of repeated coefficients discovered by D. Singmaster, by using fundamental results from Diophantine geometry to prove that in the case $a \neq b$, solutions to ${x \choose y}={{x-a} \choose {y+b}}$ are finite. Finally, we make some observations about the potential utility of equations of the form ${x \choose y}={{x-a} \choose {y+b}}$ in proving Singmaster's conjecture, which is the main unsolved problem in the area of repeated binomial coefficient study. We remark that this approach to the conjecture is markedly different from previous approaches, which have only established logarithmic bounds on a function which counts the number of representations of $t$ as a binomial coefficient.\par
\end{abstract}
\section{Introduction}
\noindent The sequence of binomial coefficients is one of the most well-studied, frequently-used, and generally significant sequences in all of mathematics. It is interesting, therefore, that the analysis of repeated binomial coefficients (coefficients which occur more often than the trivial two times which every number occurs) has only received sustained attention in the past 50 years. Clearly, many numbers occur three and four times; these are what fill up the inside of Pascal's triangle. However, the only other high multiplicities known to occur\textemdash 6 and 8\textemdash are rare, and the patterns in which they appear are not well understood. \par
However, some scattered progress has been made.
It has been shown by Abbott \emph{et al.} [1] that the function $N(t)$ which counts the number of ways of representing $t$ as a binomial coefficient has average order and normal order 2.
All solutions to ${x \choose 2}={y \choose 3}$ were found by Avanesov [2, referenced in 10], and Kiss [10] established the more general result of finiteness of solutions to ${x \choose 2}={y \choose p}$, for $p$ a prime. ${x \choose 2}={y \choose 4}$ and ${x \choose 3}={y \choose 4}$ have also been completely solved by de Weger [17]. More recently, Bugeaud \emph{et al.} [4] have found all solutions to ${x \choose 2}={y \choose 5}$ using an improvement of the Mordell-Weil sieve, which is applicable to finding integral points on all hyperelliptic curves.\par
Perhaps the most striking result was found by Lind [11], who showed that if $n=F_{2i+2}F_{2i+3}-1$ and $k=F_{2i}F_{2i+3}-1$ (where $F_i$ is the $i$-th Fibonacci number), then ${{n+1} \choose {k+1}}={n \choose {k+2}}$. David Singmaster [16] also provided a proof of this, and noted that his result provides an infinite family of numbers with multiplicity at least 6. The first member of this family\textemdash3003\textemdash is also the only known number with multiplicity 8.
Singmaster [15] also made the following dramatic conjecture, the study of which has been an important feature of subsequent work on repeated binomial coefficients: If $N(t)$ denotes the number of times $t$ occurs in Pascal's triangle, $N(t)=O(1)$.\par
There has been no direct attempt at proving the existence of such a finite upper bound on the number of ways $t$ may be represented as a binomial coefficient. Bounds on $N(t)$ in terms of $t$ were obtained first by Singmaster [15], then by Abbott \emph{et al.} [1], and then by Kane [8]. Currently the best unconditional bound is $N(t)=O(\frac{(\log{t})(\log{\log{\log{t}}})}{(\log{\log{t}})^3})$, obtained by Kane [9] via an argument relating integer solutions of ${x \choose y}=m$ to derivatives of a function implicitly defined in terms of the $\Gamma$-function. Conditional on Cram\'{e}r's conjecture about small gaps between prime numbers, Abbott \emph{et al.} [1] obtained $N(t)=O((\log{t})^{\frac{2}{3}})$.\par
The purpose here will be to provide information about generalizations of the equation solved by Lind [11] and Singmaster [16]: ${{n+1} \choose {k+1}}={n \choose {k+2}}$. This is equivalent to ${n \choose k}={{n-1} \choose {k+1}}$. However, to our knowledge, no studies of equations of the general form ${n \choose k}={{n-a} \choose {k+b}}$ have been made. We will present two independent results about the solutions to such equations; one describes where solutions may occur, and the other asserts the finiteness of solutions (in most cases). We will also provide a rationale for why considering such equations may provide a powerful framework for proving the Singmaster conjecture itself.
\section{Results}
\noindent First, we make the following basic proposition about the location of repeats.
\newtheorem*{prop}{Proposition} 
\begin{prop}
Let $x$, $y$, $a$, and $b$ be natural numbers such that $a<y$; let $\zeta$ be the positive number defined by $\zeta^{a+b}-(\zeta+1)^a=0$.
If ${x \choose y}={{x-a} \choose {y+b}}$, we have $\frac{x-a-y-b+1}{y+b}<\zeta<\frac{x-y}{y-a+1}$.
\end{prop}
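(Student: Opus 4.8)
The plan is to convert the binomial equation into a polynomial identity and then read off both bounds from the monotonicity of a single auxiliary function. First I would clear factorials in $\binom{x}{y}=\binom{x-a}{y+b}$, obtaining the equivalent identity
\[
\prod_{i=0}^{a-1}(x-i)\,\prod_{j=1}^{b}(y+j)=\prod_{k=0}^{a+b-1}(x-y-k),
\]
whose two sides are products of $a+b$ positive integers: on the left, $a$ factors near $x$ and $b$ near $y$; on the right, the $a+b$ consecutive integers descending from $x-y$ to $x-a-y-b+1$. Since $\binom{x-a}{y+b}=\binom{x}{y}>0$ forces $y+b\le x-a$, every factor is positive and $x-a-y-b+1\ge 1$; the hypothesis $a<y$ gives $y-a+1\ge 2$, so the quantities introduced below all have positive denominators.

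Next I would set $h(t)=t^{a+b}(t+1)^{-a}$. Since $\frac{d}{dt}\log h(t)=\frac{bt+a+b}{t(t+1)}>0$ for $t>0$, the function $h$ is strictly increasing from $0$ to $+\infty$, so $\zeta$, the unique positive solution of $h=1$, is well defined. Writing $U=\frac{x-y}{y-a+1}$ and $L=\frac{x-a-y-b+1}{y+b}$, one computes $U+1=\frac{x-a+1}{y-a+1}$ and $L+1=\frac{x-a+1}{y+b}$, so by monotonicity the claim $L<\zeta<U$ is equivalent to $h(L)<1<h(U)$, that is, to the two polynomial inequalities
\[
\text{(A)}\quad (x-y)^{a+b}>(x-a+1)^{a}(y-a+1)^{b},\qquad
\text{(B)}\quad (x-a-y-b+1)^{a+b}<(x-a+1)^{a}(y+b)^{b}.
\]

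Inequality (A) comes out at once from the identity: its right-hand product is at most $(x-y)^{a+b}$ since every factor is $\le x-y$, while its left-hand product is at least $(x-a+1)^{a}(y+1)^{b}$, hence at least $(x-a+1)^{a}(y-a+1)^{b}$ because each $x-i\ge x-a+1$ and each $y+j\ge y+1>y-a+1$; combining these with the identity yields (A), strictly since the factor $y+1>y-a+1$ is.

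The step I expect to be the real obstacle is (B), where the crude chain fails: bounding the left-hand product of the identity from above gives only $x^{a}(y+b)^{b}$, and $x^{a}$ overshoots the required $(x-a+1)^{a}$. To recover the sharper constant I would expand each product about its relevant endpoint. With $D=x-a-y-b+1$, $E=x-a+1$, $F=y+b$ the identity rearranges to
\[
\frac{E^{a}F^{b}}{D^{a+b}}=\frac{\prod_{m=0}^{a+b-1}\bigl(1+\tfrac{m}{D}\bigr)}{\prod_{m=0}^{a-1}\bigl(1+\tfrac{m}{E}\bigr)\,\prod_{n=0}^{b-1}\bigl(1-\tfrac{n}{F}\bigr)},
\]
and it suffices to show the right-hand side exceeds $1$. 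The key observation is $E-D=y+b>0$, so $1+\tfrac{m}{D}>1+\tfrac{m}{E}$ makes the first $a$ numerator factors dominate the $a$ denominator factors; the remaining $b$ numerator factors are each $\ge 1$ (with $1+\tfrac{a}{D}>1$ strict), whereas each $1-\tfrac{n}{F}\le 1$. Hence the numerator strictly exceeds the denominator, giving $E^{a}F^{b}>D^{a+b}$, which is (B) and with it $L<\zeta$. The same factoring method would in fact reprove (A), but the essential difficulty is concentrated in this lower bound, where the naive extremal estimate is genuinely too weak.
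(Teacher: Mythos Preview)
Your proof is correct and takes a genuinely different route from the paper's. The paper expands $\binom{x}{y}$ via iterated Pascal's rule into the row-$(x-a)$ sum $\sum_{k=0}^{a}\binom{a}{k}\binom{x-a}{y-a+k}=\binom{x-a}{y+b}$, divides through by $\binom{x-a}{y-a}$ (this is where $a<y$ enters), and then argues with the strictly decreasing sequence of successive ratios $r_i=\binom{x-a}{y-a+i}\big/\binom{x-a}{y-a+i-1}$: if every $r_i$ lay below $\zeta$ (respectively above $\zeta$), the right side $r_1\cdots r_{a+b}$ would shrink (grow) proportionally more than the left side, which is a convex combination of partial products anchored by the constant term $1$, contradicting equality. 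You instead clear factorials once to obtain the single product identity, package the comparison in the monotone auxiliary $h(t)=t^{a+b}/(t+1)^a$, and reduce the claim to the two explicit inequalities (A) and (B), which you settle by termwise bounding after expanding each product about the endpoints $D,E,F$; the observation $E-D=F>0$ is exactly what makes the pairing in (B) work. Your argument is more self-contained and fully explicit about strictness; the paper's has the complementary virtue of identifying the bounds as precisely the extreme successive ratios $r_1$ and $r_{a+b}$, so the inequality is read off from the combinatorics of Pascal's triangle rather than from an algebraic rearrangement.
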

\begin{proof}
First, note the elementary fact that any entry in Pascal's triangle may be written as the sum of the two entries above it: ${n \choose k}={{n-1} \choose k}+{{n-1} \choose {k-1}}$.
This process may be iterated to obtain a representation of ${n \choose k}$ as a sum of binomial coefficients of any row number less than $n$. For example, with two and three iterations, we obtain, respectively,
\begin{equation*}
{n \choose k}={{n-2} \choose {k-2}}+2{{n-2} \choose {k-1}}+{{n-2} \choose k},
\end{equation*}
\begin{equation*}
{n \choose k}={{n-3} \choose {k-3}}+3{{n-3} \choose {k-2}}+3{{n-3} \choose {k-1}}+{{n-3} \choose k}.
\end{equation*}
That the coefficients appearing in the $r$-th such iterate correspond to the binomial coefficients of row $r$ follows from the observation that when generating the coefficient for the next term with $k-s$, one adds together the coefficients of the current terms with $k-s$ and $k-s+1$. This is exactly the process which ordinarily generates the binomial coefficients in Pascal's triangle.\par
We therefore may always write the equation ${x \choose y}={{x-a} \choose {y+b}}$ as
\begin{equation}
{{x-a} \choose {y-a}}+a{{x-a} \choose {y-a+1}}+{a \choose 2}{{x-a} \choose {y-a+2}} \dots + {{x-a} \choose {y}}={{x-a} \choose {y+b}}
\end{equation}
Next, we note that if $k<\frac{n}{2}$ and $n$ and $k$ are large, the ratios of successive binomial coefficients ${n \choose {k+1}}:{n \choose k}$, ${n \choose {k+2}}:{n \choose {k+1}}$, and so on, are strictly decreasing, and are close to being constant. Specifically, ${n \choose {k+1}}/{n \choose k}=\frac{n-k}{k+1}$. Suppose we call the ratios ${{x-a} \choose {y-a+1}}/{{x-a} \choose {y-a}}$, ${{x-a} \choose {y-a+2}}/{{x-a} \choose {y-a+1}}$, ${{x-a} \choose {y-a+3}}/{{x-a} \choose {y-a+2}}$ \dots $r_1$, $r_2$, $r_3$ and so on. Then we may rewrite (1) as
\begin{equation}
1+ar_1+{a \choose 2}r_1r_2+{a \choose 3}r_1r_2r_3 \dots + r_1r_2r_3\dots r_{a-1}=r_1r_2r_3 \dots r_{a+b}
\end{equation}
When $x$ and $y$ are very large in comparison to $a$ and $b$, all the $r_i$ are approximately the same (because of the expression for the ratio of successive binomial coefficients), and hence by the binomial theorem are all approximately the (positive) solution of $(\zeta + 1)^a=\zeta^{a+b}$.\par
(2) would be true if $\zeta=r_1=r_2=r_3\dots=r_{a+b}$. However, because of the strict decrease mentioned above, we have $r_1>r_2>r_3\dots>r_{a+b}$. Suppose, then, that $r_1<\zeta$. Then all the $r_i$ are, and the right side of (2) has experienced a proportional decrease from $\zeta^{a+b}$ which is the product of all the proportional decreases in the individual $r_i$. However, the left side cannot have experienced so great a decrease from $(\zeta+1)^a$, since no term has decreased proportionally more than the right side, and there is one term (the constant, 1) which has not decreased at all. Thus the equation (2) can no longer be true. We apply the same argument to find that $r_{a+b}$ cannot be greater than $\zeta$. \par Writing out $r_1={{x-a} \choose {y-a+1}}/{{x-a} \choose {y-a}}=\frac{x-y}{y-a+1}$ and $r_{a+b}={{x-a} \choose {y+b}}/{{x-a} \choose {y+b-1}}=\frac{x-a-y-b+1}{y+b}$ yields the inequality in the Proposition. We must impose the condition $a<y$, because in reformulating equation (1) as equation (2), we have assumed that we may divide through by the leftmost term ${{x-a} \choose {y-a}}$, which is nonzero iff $a<y$.
\end{proof}
\newtheorem*{thm}{Theorem}
\begin{thm}
If $b\neq a$, the equation ${x \choose y}={{x-a} \choose {y+b}}$ has finitely many solutions in natural numbers $x$, $y$.
\end{thm}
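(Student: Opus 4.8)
The plan is to convert the binomial identity into a polynomial equation in two variables and then invoke the finiteness of integral points on curves of positive genus. First I would clear factorials: writing the equation as $\frac{x!}{(x-a)!}\cdot\frac{(y+b)!}{y!}=\frac{(x-y)!}{(x-y-a-b)!}$ and expanding each ratio as a product of consecutive integers gives
\begin{equation*}
\prod_{i=0}^{a-1}(x-i)\cdot\prod_{j=1}^{b}(y+j)=\prod_{i=0}^{a+b-1}(x-y-i).
\end{equation*}
Substituting $w=x-y$ turns this into $P(w,y)=0$ for the explicit polynomial
\begin{equation*}
P(w,y)=\prod_{i=0}^{a+b-1}(w-i)-\prod_{i=0}^{a-1}(w+y-i)\prod_{j=1}^{b}(y+j)
\end{equation*}
of total degree $a+b$. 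Every sufficiently large natural-number solution of the original equation yields an integral point on the affine plane curve $C:P(w,y)=0$, so it suffices to bound the integral points on $C$.

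The degree-$(a+b)$ part of $P$ is the leading form $F(w,y)=w^{a+b}-(w+y)^{a}y^{b}$, whose zero directions $t=w/y$ are exactly the roots of $t^{a+b}-(t+1)^{a}$, the very polynomial whose positive root is the constant $\zeta$ of the Proposition. Since neither $[1:0]$ nor $[0:1]$ lies on the projective closure of $C$ (one checks $F(1,0)\neq0$ and $F(0,1)\neq0$), the points at infinity of $C$ correspond precisely to the distinct roots of $t^{a+b}-(t+1)^{a}$. I would then apply Siegel's theorem: an irreducible affine curve over a number field has only finitely many integral points unless it is rational (genus $0$) and meets the line at infinity in at most two points. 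Finiteness for $C$ therefore follows once I show that no irreducible component of $C$ is such a degenerate rational curve; the roots of $t^{a+b}-(t+1)^{a}$ are what supply the points at infinity needed to exclude this, and for components of degree $\ge 4$ one could alternatively appeal to Faltings' theorem, since a smooth plane curve of that degree has genus at least two.

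The hard part is the interplay between reducibility and the hypothesis $a\neq b$. Siegel's theorem must be applied component by component, and a spurious low-degree component carrying few points at infinity could in principle support an infinite family of integral points; this is not a vacuous worry, since the Lind--Singmaster family shows that for $a=b=1$ the equation genuinely has infinitely many solutions. Indeed, when $a=b$ the leading form becomes $w^{2a}-\big((w+y)y\big)^{a}$, which contains the factor $w^{2}-wy-y^{2}$ (the golden-ratio direction underlying the Fibonacci solutions), producing exactly such a rational component. The crux of the argument is thus to prove that for $a\neq b$ the polynomial $P$ admits no rational component with two or fewer points at infinity---equivalently, that the factorization responsible for the infinite family in the $a=b$ case fails when $a\neq b$---and then to assemble the per-component applications of Siegel's theorem into finiteness for $C$ and hence for the original equation. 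Establishing this non-degeneracy of the components, controlling both their genus and their behavior at infinity uniformly in $a$ and $b$, is where I expect the main work to lie.
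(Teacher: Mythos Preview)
Your outline tracks the paper's strategy closely: rewrite the identity as the polynomial curve, observe that the asymptotic directions at infinity are governed by the roots of $t^{a+b}-(t+1)^a$, and invoke Siegel. Where you stop, however, is exactly where the paper's actual content begins. You say that ``the crux of the argument is thus to prove that for $a\neq b$ the polynomial $P$ admits no rational component with two or fewer points at infinity'' and that this ``is where I expect the main work to lie.'' That work is not optional; without it there is no proof, and your text supplies no mechanism for carrying it out.

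The paper closes the gap in two ways. First, instead of Siegel's theorem applied component by component (which, as you note, forces one to control reducibility, genus, and the infinity locus of each factor), it invokes the Maillet--Nagell criterion: a plane curve carries infinitely many lattice points only if it admits a parametrization $x=f(t)/h(t)^n$, $y=g(t)/h(t)^n$ with $h$ either constant, linear, or an irreducible real quadratic. This immediately reduces the question to showing that the limiting ratio $c=\lim_{y\to\infty}x/y$, which must satisfy $(c-1)^{a+b}-c^a=0$, is neither rational nor a real quadratic irrational --- bypassing the component analysis entirely. Second, the paper proves this arithmetic fact as a self-contained lemma: for $n>r$ with $n\neq 2r$, the polynomial $x^n-(x+1)^r$ has no quadratic factor over $\mathbb{Z}$ with positive discriminant. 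The proof is elementary: Gauss's lemma forces any such factor to have leading and constant coefficients $\pm 1$; Descartes' rule on the shift $(x-1)^n-x^r$ shows $P$ has no real root below $-1$; evaluating candidate factors at $0$ and $-1$ leaves only $x^2+x+1$, $x^2+3x+1$, $x^2+x-1$, and $x^2-x-1$, and the first three are eliminated by discriminant or by the root bound, while $x^2-x-1$ divides $x^n-(x+1)^r$ precisely when $n=2r$. That lemma is the missing ingredient in your proposal; your identification of the quadratic factor $w^2-wy-y^2$ in the $a=b$ case is correct, but you give no argument that it is the \emph{only} obstruction.
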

Any such equation can be written as the equation of an algebraic curve $\mathcal{C}$
\begin{equation}
\prod_{r=0}^{a+b-1}(x-y-r)-\prod_{p=0}^{a-1}(x-p)\prod_{q=1}^b(y+q)=0
\end{equation}
in $x$ and $y$. For example, the equation ${x \choose y}={{x-1} \choose {y+1}}$, which Singmaster [16] solved, corresponds to the curve $(x-y)(x-y-1)-x(y+1)=0$. This means that the proof of the theorem is reduced to the well-studied problem of determining whether an algebraic curve has an infinity of lattice points. For an individual curve, the standard approach to such a problem is to determine that the curve is irreducible and has genus greater than 0. If so, then by Siegel's theorem [7, p. 353] the set of lattice points on the curve is finite. However, determining the genus alone requires an analysis of singularities [7, p. 72]. This, in turn, amounts to solving the system where the three partial derivatives of the homogeneous version of the curve are simultaneously equated to zero [5, p. 19]; a task which appears to be unattackable for general curves of this complexity.\par
We therefore will not attempt to prove that Siegel's theorem is directly applicable to these curves, but instead will make use of the following criterion given by Nagell (originally due to Maillet) [12, p. 264].\par
\begin{center}
\begin{minipage}{.8\textwidth}
\singlespacing A unicursal [genus 0] curve passes through an infinity of lattice points if and only if there exists a parametric representation of the form
\begin{equation*}
x=\frac{f(t)}{(h(t))^n}, \hspace{17 mm} y=\frac{g(t)}{(h(t))^n}
\end{equation*}
where $n$ is a natural number, and where $f(t)$, $g(t)$, and $h(t)$ are integral polynomials in $t$ satisfying one of the following conditions:\\
1. Either $h(t)=at+b$ with $\gcd{a, b}=1$ or $h(t)=1$; $f(t)$ and $g(t)$ are both of degree $n$;\\
2. $h(t)=at^2+bt+c$ is irreducible, and $a>0$, $b^2-4ac>0$; $f(t)$ and $g(t)$ are both of degree $2n$; the form $au^2+buv+cv^2$ can represent for integral values of $u$ and $v$ a certain integer $k\neq0$ such that $k^n$ divides all the coefficients of both $f(t)$ and $g(t)$.
\end{minipage}
\end{center} \hspace{0mm}\\
\indent Nagell goes on to state that shortly after Maillet gave this criterion, Siegel proved that it applies to all curves (i.e. not just unicursal ones).\par To apply this criterion to our curves, it will be necessary to consider the limiting behavior as $y\rightarrow \infty$. We may assume the curves have points of arbitrarily large $y$; if they did not, they could not have arbitrarily large $x$ either (since clearly $\lim_{y \to \infty} \frac{x}{y} \neq \infty$) and so could only pass through finitely many points with natural $x$, $y$, which are the only points which matter for the theorem about binomial coefficients.\par Qualitatively, it is clear that $\lim_{y \to \infty} \frac{x}{y}$ must be such that the highest total degree terms in the equation of $\mathcal{C}$ almost cancel each other out as $y \rightarrow \infty$. Formally if $T_n$ is the $n$-th term with total degree $a+b$, 
\begin{equation}
\lim_{y \to \infty} \frac{\sum T_n}{y^{a+b}}=0, 
\end{equation}
because otherwise, for large $y$, the value of $\sum T_n$ would be the only $O(y^{a+b})$ term in $\mathcal{C}$. There would be other terms $O(y^{a+b-1})$, $O(y^{a+b-2})$, and so on, but even a direct sum of these is not $O(y^{a+b})$, and they clearly are not all summed together. \par
What is the sum of the highest total degree terms in (3)? The degree $a+b$ terms from the first product,   
$\prod_{r=0}^{a+b-1}(x-y-r)$, are simply the terms of $(x-y)^{a+b}$. There is only one degree $a+b$ term in the second product; it is $x^ay^b$. Equation (4) then becomes
\begin{equation*}
\lim_{y \to \infty} \frac{(x-y)^{a+b}-x^ay^b}{y^{a+b}}=0;
\end{equation*}
\begin{equation*}
\lim_{y \to \infty} \left( \frac{x^{a+b}}{y^{a+b}}-{{a+b} \choose 1}\frac{x^{a+b-1}}{y^{a+b-1}}+{{a+b} \choose 2}\frac{x^{a+b-2}}{y^{a+b-2}}\dots - \frac{x^{a}}{y^{a}} \right)=0.
\end{equation*}
\par
Therefore, if we take $c=\lim_{y \to \infty} \frac{x}{y}$, we must have $(c-1)^{a+b} -c^a=0$. \par
Now, consider the form of $\lim_{y \to \infty} \frac{x}{y}$ if there exists a parametric representation as described in Nagell's [12, p. 264] criterion. If $h(t)=1$, $y=g(t)$, and $y$ goes to $\infty$ as $t$ does. This means that $\lim_{y \to \infty} \frac{x}{y}=\lim_{t \to \infty} \frac{f(t)}{g(t)}$, which has a constant, rational value, because $f(x)$ and $g(x)$ are integral polynomials of the same degree in $t$. This cannot be the case, because a simple application of the rational root test shows that $c$ is irrational.\par
By Nagell's criterion, it must then be that $x$ and $y$ are given by rational functions of $t$ with numerator and denominator polynomials of the same degree. Then $y$ can only approach infinity when $t$ approaches one of the roots of the denominator, $h(t)$, i. e. when $t$ approaches either a rational or quadratic irrational number $\alpha$. We thus have that $c=\lim_{y \to \infty} \frac{x}{y}=\lim_{t \to \alpha}\frac{f(t)}{g(t)}$. $\lim_{t \to \alpha}\frac{f(t)}{g(t)}$ is clearly the quotient of two quadratic irrationals; because $f$ and $g$ have the same input $\alpha$, the number under the radical in both quadratic irrationals is the same. This means that the quotient is itself a quadratic irrational, by rationalization of denominators. We will now show that if $a\neq b$, $c$ cannot be a quadratic irrational, and hence no parametrization of the type described can exist. For convenience, we will work with the equation $c^{a+b}-(c+1)^a=0$, instead of the original $(c-1)^{a+b}-c^a=0$; the former is shifted 1 unit to the left, and obviously has quadratic zeros iff the original does.
\newtheorem*{lem}{Lemma}
\begin{lem}
If $n$ and $r$ are such that $n>r$ and $\frac{n}{r} \neq 2$, the polynomial $P(x)=x^n-(x+1)^r$ has no real roots of degree 2.
\end{lem}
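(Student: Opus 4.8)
The plan is to combine a sharp localization of the real roots of $P$ with an ideal-theoretic argument inside the real quadratic field generated by a hypothetical degree-$2$ root. The governing principle is that a real root of degree $2$ drags its conjugate along: if $\alpha$ is a real quadratic irrational with $\alpha^n=(\alpha+1)^r$, its Galois conjugate $\alpha'$ is also real (a rational quadratic with one real root has two), and applying the conjugation automorphism gives $(\alpha')^n=(\alpha'+1)^r$. Thus $P$ would have to possess \emph{two} distinct real roots, namely the two roots of a single rational quadratic $m(x)=x^2-sx+p$ dividing $P$. The strategy is to show this is impossible unless $n/r=2$.

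First I would pin down exactly where real roots can lie. On $(0,\infty)$ set $\phi(x)=n\log x-r\log(x+1)$; then $\phi'(x)=\frac{(n-r)x+n}{x(x+1)}>0$, so $\phi$ is strictly increasing, and since $\phi(1)=-r\log 2<0$ with $\phi(x)\to+\infty$, there is exactly one positive root $\rho$, necessarily in $(1,\infty)$. The estimate $|x|^n>(|x|-1)^r$ for $x<-1$ excludes every root $\le -1$, and $[0,1]$ is plainly root-free. On $(-1,0)$, writing $x=-v$ and analyzing $v^n-(1-v)^r$ by the same monotonicity trick yields exactly one root when $n$ is even and none when $n$ is odd. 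Hence every real root of $P$ lies in $(-1,0)\cup(1,\infty)$, with at most one root in each interval. This already kills the case $n$ odd (no root in $(-1,0)$, so no conjugate pair), and for $n$ even it forces the only candidate conjugate pair to be $\{\rho,\alpha_0\}$ with $\rho>1$ and $\alpha_0\in(-1,0)$, so in particular $p=\rho\alpha_0<0$.

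The decisive step is then an ideal computation. Since $m$ is a monic rational factor of the monic integer polynomial $P$, Gauss's lemma gives $m\in\mathbb{Z}[x]$, so $\rho,\alpha_0$ are algebraic integers in $K=\mathbb{Q}(\rho)$ and $p\le -1$. Dividing $n,r$ by $g=\gcd(n,r)$ is harmless, because $\rho,\rho+1>0$ makes $\rho^n=(\rho+1)^r$ equivalent to $\rho^{n'}=(\rho+1)^{r'}$; I then pass to ideals in $\mathcal{O}_K$. From $(\rho)^{n'}=(\rho+1)^{r'}$ and $\gcd(n',r')=1$, comparing $\mathfrak{p}$-adic valuations gives a common integral ideal $\mathfrak{a}$ with $(\rho)=\mathfrak{a}^{r'}$ and $(\rho+1)=\mathfrak{a}^{n'}$. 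But $(\rho)+(\rho+1)=(1)$ since $\rho$ and $\rho+1$ differ by $1$, and $\mathfrak{a}^{r'}$ divides both $(\rho)$ and $(\rho+1)$ (as $r'<n'$), so $\mathfrak{a}^{r'}$ divides $(1)$, forcing $\mathfrak{a}=(1)$. Thus $\rho$ and $\rho+1$ are both units; taking norms, $N(\rho)=p=-1$ and $N(\rho+1)=p+s+1=1$ (it is positive), whence $s=1$. Therefore $m(x)=x^2-x-1$, $\rho$ is the golden ratio, $\rho+1=\rho^2$, and $\rho^{n'}=(\rho+1)^{r'}=\rho^{2r'}$ gives $n'=2r'$, i.e. $n/r=2$ — contradicting the hypothesis.

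I expect the main obstacle to be the root-localization in the second step: confining all real roots to $(-1,0)\cup(1,\infty)$ with at most one per interval is what both eliminates the odd case and supplies the crucial sign $p<0$ without which the norm $N(\rho)$ could equal $+1$ and the argument would stall. Once that sign is secured, the ideal computation is short; the only delicate points there are the reduction to $\gcd(n,r)=1$ and the verification that $(\rho)$ and $(\rho+1)$ are coprime ideals.
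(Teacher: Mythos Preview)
Your argument is correct and complete, but it travels a genuinely different road from the paper. The paper's proof is entirely elementary: after invoking Gauss's lemma, it observes that the leading and constant coefficients of $P$ are $\pm 1$, so any integral quadratic factor has the shape $\pm x^2+bx\pm 1$; evaluating $P$ at $0$ and $-1$ then pins $b$ down to a handful of values, leaving the four candidates $x^2+x+1$, $x^2+3x+1$, $x^2-x-1$, $x^2+x-1$. A Descartes-type sign analysis (shifting by $1$) shows $P$ has no real root below $-1$, which kills all but $x^2-x-1$, and that last factor is disposed of by the one-line subtraction $x^{2r}-(x+1)^r-(x^n-(x+1)^r)=x^{2r}-x^n$. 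Your route instead sharpens the root localization (exactly one root in $(1,\infty)$, at most one in $(-1,0)$, none elsewhere) to force the conjugate pair to straddle $0$, and then moves to the ring of integers of $K=\mathbb{Q}(\rho)$: the ideal identity $(\rho)^{n'}=(\rho+1)^{r'}$ together with $(\rho)+(\rho+1)=(1)$ forces both $\rho$ and $\rho+1$ to be units, and the norm computation (using the sign $p<0$ from your localization) pins the minimal polynomial to $x^2-x-1$. The paper's approach is lighter in prerequisites and self-contained; yours is more structural, dispatches the odd-$n$ case instantly, and the ``$\rho$ and $\rho+1$ are both units'' step is a clean insight that could plausibly be pushed toward higher-degree factors, which the paper's coefficient-enumeration method cannot.
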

\begin{proof}
We will attack this by showing that it is impossible for $P$ to have a quadratic factor with positive discriminant. We begin by noting that since $P(x)$ is primitive, by Gauss' lemma [13, p. 49], it suffices to consider quadratic factors with integral coefficients. Since the first and last terms of $P(x)$ have magnitude 1, any such factor must be of the form $\pm x^2+bx\pm1$ or $\pm x^2+bx\mp1$, with $b\in \mathbb{Z}$.\par
Also note that by Descartes' sign test, $P(x)$ has exactly 1 positive root. Make the substitution $x\mapsto x-1$, generating the new polynomial $G(x)=(x-1)^n-x^r$, which has been shifted 1 unit to the right. Substituting $-x$ for $x$ in $G$, we see that regardless of the parity of $n$ and $r$, there are no sign changes. Thus, $G$ has no negative roots. We conclude that $P(x)$ has no negative roots smaller than $-1$.\par
We have $P(0)=-1$, and $P(-1)=\pm1$, depending on whether $n$ is even or odd. This means that any quadratic factor $Q$ must take the values $\pm1$ at $x=0$ and $x=-1$. If $Q(-1)=1$, we have the following four cases:
 \begin{equation*}
Q(-1)=1=(-1)^2+b\times(-1)+1
\end{equation*}
\begin{equation*}
Q(-1)=1=-(-1)^2+b\times(-1)-1
\end{equation*}
\begin{equation*}
Q(-1)=1=(-1)^2+b\times(-1)-1
\end{equation*}
\begin{equation*}
Q(-1)=1=-(-1)^2+b\times(-1)+1
\end{equation*}
which yield, respectively, $b=1$, $b=-3$, $b=-1$, and $b=-1$. The values obtained by the same process for $Q(-1)=-1$ are, in order, $b=3$, $b=-1$, $b=1$, and $b=1$. The four cases where the first and last terms of $Q$ have the same sign generate only two polynomials with distinct roots, as do the cases where they have opposite signs. The complete list of quadratic factors of $x^n-(x+1)^r$ to be considered is thus
\begin{equation*}
x^2+x+1 \hspace{10 mm} x^2+3x+1 \hspace{10 mm} x^2-x-1 \hspace{10 mm} x^2+x-1
\end{equation*}
The first has no real roots. The second has a root $-\frac{3}{2}-\frac{\sqrt{5}}{2}$, which is less than $-1$; therefore it cannot be a factor, by the sign test performed earlier. Neither can the last, because of the root $-\frac{1}{2}-\frac{\sqrt{5}}{2}$.\par
The only possible quadratic factor is thus $x^2-x-1$. We observe that if $\frac{n}{r}=2$, this \emph{is} a factor, as can be seen from writing $x^{2r}-(x+1)^r=0$, adding one term to the other side, and taking roots. It is then easily seen that no other polynomials $x^n-(x+1)^r$ can share this factor, for 
if they did, the difference $x^{2r}-(x+1)^r-(x^n-(x+1)^r)=x^{2r}-x^n$ must also share the factor, which it clearly does not. This proves the lemma, and by extension the theorem.
\end{proof}

\section{Analysis of Methods and Intuitive Explanation}
\noindent The results obtained here describe instances where the numbers in a particular ``configuration" in the triangle are the same. The most basic instance of this, the ``configuration"
\begin{center} \includegraphics{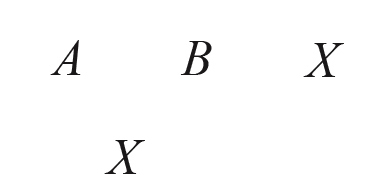} \end{center}
was shown by Singmaster [16] and Lind [11] to occur infinitely many times; in fact, precisely when $n$ and $k$ are certain expressions given by Fibonacci numbers. We have shown that configurations such as
\begin{center} \includegraphics{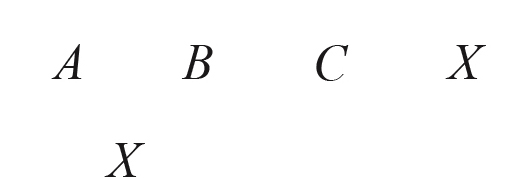} \hspace{10mm} and \hspace{7mm} \includegraphics{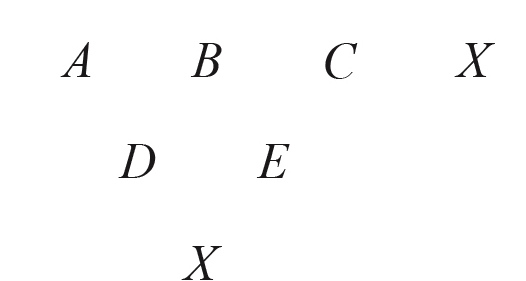} \end{center}
can occur only finitely many times, if at all. But we have \emph{not} shown, for example, that
\begin{center} \includegraphics{configurationsing} \hspace{10mm} and \hspace{7mm}  \includegraphics{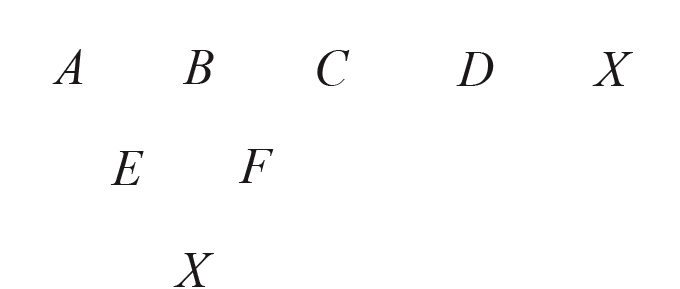} \end{center}
occur finitely many times, because in those cases the difference in $k$-values is equal to the difference in $n$-values, and the associated polynomial $x^{2r}-(x+1)^r$ has the quadratic irrational roots $\varphi$ and $-\frac{1}{\varphi}$. However, the assertion that solutions are finite is still nothing more than asserting that a certain subclass of the curves studied are irreducible and have fewer than the maximum allowable number of singularities ($\frac{(d-1)(d-2)}{2}$ [7, p. 72], barring the possibility of non-ordinary singularities), something which seems very likely.\par
We will now analyze one of the higher-degree analogues to the curve $(x-y)(x-y-1)-x(y+1)=0$ in order to illustrate the validity of this idea. As we will see, the reason this is difficult in general is because of the necessity of computing a Gr\"{o}bner basis to determine that the polynomial and its two partial derivatives share no common zeros. \par 
In the case of the next curve with possibly infinite lattice points (the curve with $a=2$, $b=2$; defined by $F(x,y)=(x-y)(x-y-1)(x-y-2)(x-y-3)-x(x-1)(y+1)(y+2)=0$), we may mechanically compute the Gr\"{o}bner basis [13, pp. 221, 237] for the system $F=\frac{\partial F}{\partial x}=\frac{\partial F}{\partial y}=0$ [5, p. 19] to see that there are no affine singularities. If we then homogenize coordinates, and consider the system $\frac{\partial F}{\partial x}=\frac{\partial F}{\partial y}=\frac{\partial F}{\partial z}=0$ [5, p. 19] at $z=0$, we see that the only solution must be $[x : y : z]=[0 : 0 : 0]$, which is not a valid point [7, p. 12]. This is because $\frac{\partial F}{\partial x}$, $\frac{\partial F}{\partial y}$, and $\frac{\partial F}{\partial z}$ are all homogeneous polynomials in $x$ and $y$ when $z=0$. By the same argument we have used to determine $\lim_{y \to \infty} \frac{x}{y}$, any homogeneous polynomial in two variables represents the union of some (possibly complex) lines through the origin. None of these lines are the same, and thus the only solution to this system is $(0, 0)$. We conclude that $F$ has no singularities.\par 
We may therefore apply the genus-degree formula without subtraction of additional terms: $g=\frac{(d-1)(d-2)}{2}=\frac{3\times2}{2}=3$ [7, p. 72]. That $F$ is irreducible follows immediately from its being nonsingular, for by B\'{e}zout's theorem [7, p. 84], any hypothetical components of $F$ must intersect somewhere in the complex projective plane, and thus create a singularity in $F$. By Siegel's theorem [7, p. 353], therefore, the set of lattice points is finite. \par
In the Proposition, we have shown that if a certain configuration occurs entirely within the triangle, the smooth function giving the ratio of one binomial coefficient to the preceding one must take a value $\zeta$ (1 plus the root of the associated polynomial) between the ``beginning" of the configuration and the ``end".\par
This is essentially a precise way of stating that all the occurrences of a particular configuration have approximately the same ratio $\frac{n}{k}$. The restriction $y>a$, which was algebraically necessary to avoid dividing by zero, corresponds to requiring that the configuration is not ``cut off" by the edge of the triangle.
All currently known nontrivial repetitions (excluding Singmaster's [16]) occur so close to the side of the triangle that the Proposition does not apply; however, it is still satisfied, because the ratios on the edge are very large and change very rapidly. It is easily seen that there cannot be more than $a$ of the ``cutoff" cases, because for each $y$, there is clearly at most one $x$ where ${x \choose y}={{x-a} \choose{y+b}}$. \par
In the case of Singmaster's infinite family, the ratio $\varphi$ is always less than ${{n-1} \choose {k+1}}/{{n-1} \choose k}$, and greater than ${{n-1} \choose k}/{{n-1} \choose {k-1}}$. This can also be seen as a direct result of working out ratios of the given expressions involving Fibonacci numbers ($n=F_{2i+2}F_{2i+3}-1$ and $k=F_{2i}F_{2i+3}-1$). It works out that the ratios ${{n-1} \choose {k+1}}/{{n-1} \choose k}$ and ${{n-1} \choose k}/{{n-1} \choose {k-1}}$ are ratios of successive pairs of Fibonacci numbers\textemdash successive continued fraction convergents to $\varphi$. In this sense, the coefficient repetition occurs at all the ``best possible" approximations to $\varphi$. It is tempting to think that this is somehow necessary for repetitions to occur, and then to try and disprove the existence of \emph{any} other repeats ``deep" in the triangle by proving that convergents to the other, non-quadratic ratios cannot occur sequentially in this way. This seems plausible because, even without invoking the more rapid continued fraction convergence properties of higher degree algebraic numbers, we have that the maximum difference between consecutive continued fraction convergents with first denominator $q$ is less than $\frac{3}{2q^2}$ [6, p. 152], which is very often less than the difference $\frac{n+1}{(k+1)(k+2)}$ between consecutive coefficient ratios. However, there is no such obvious argument for the ``necessity" of continued fraction convergence.
\section{Possible Extensions}
\noindent The most obvious extension of our work would be to show that the curve defined by $(x-y)(x-y-1)-x(y+1)=0$ is the only one of this family of curves which passes through infinitely many lattice points, i. e. to extend the Theorem to the case when $a=b$ and $a\neq 1$. To do that, an entirely different argument to the one used in this paper would be necessary, since we have relied on the fact that the limiting ratio of $x$ to $y$ in most cases is not quadratic. If $a=b$, it is quadratic, and there is no apparent way to prove that Nagell's [12, p. 264] criterion cannot be satisfied. It is possible that the symmetry of the polynomial defining the curve when $a=b$ allows a simple algebraic manipulation of the system where it and its two partial derivatives are set equal to 0, such that an inconsistency is derived. As we have seen from the previous consideration of $(x-y)(x-y-1)(x-y-2)(x-y-3)-x(x-1)(y+1)(y+2)=0$, we may work with this system, instead of $\frac{\partial F}{\partial x}=\frac{\partial F}{\partial y}=\frac{\partial F}{\partial z}=0$, because the absence of singular points at infinity follows easily for all these curves.\par 
Another idea would be to try and use the fact that $(x-y)(x-y-1)-x(y+1)=0$ is nonsingular in order to establish the non-singularity of the higher-degree curves by induction. If we designate the first large product in the equation of one of our curves as $G(x,y)$, and the second as $R(x,y)$, we may write the curve as $G(x,y)=R(x,y)$. If $G(x,y)=R(x,y)$ is the equation of a curve with $a=b=n$, then the equation of the curve with $a=b=n+1$ is $(x-y-2n)(x-y-2n-1)G(x,y)=(x-n)(y+n+1)R(x,y)$; in other words, when $n$ increases by 1, the equation is ``multiplied" by a shifted version of $(x-y)(x-y-1)=x(y+1)$. If this kind of ``multiplication" of two nonsingular curves could be shown to preserve non-singularity, we would have an induction argument to extend the Theorem.\par 
\begin{center} * \hspace{10mm} * \hspace{10mm} * \end{center} \noindent The conclusions we have reached here are significant in their own right: they are, to our knowledge, the first fundamental results established concerning equations of the general form ${x \choose y}={{x-a} \choose {y+b}}$, where $x$ and $y$ vary. This is fundamentally different than considering ${n \choose k}={s \choose r}$ and allowing, say, $k$ and $r$ to vary (an area in which some progress in bounding and tabulating solutions has already been made [2, referenced in 10] [10] [17] [4]).\par
However, it is hardly debatable that the most ambitious and important goal in the examination of repeated coefficients is the proof of Singmaster's [15] conjecture. So far, the most pointed attacks on the conjecture have resulted only in an increasingly tight series of logarithmic bounds\textemdash an approach which \emph{a priori} seems unlikely to yield the desired $O(1)$ result. Kane [8] has stated that his method\textemdash which initially yielded $O(\frac{\log{t}\log{\log{\log{t}}}}{(\log{\log{t}})^2})$, and then was improved by a factor of $\log{\log{t}}$ [9]\textemdash probably cannot be further extended. It may be more fruitful, therefore, to cease considering $N(t)$ as a function to be bounded, and instead only to try analyze when particularly high multiplicities of $t$ occur. We have not done this; our concern has simply been with a certain type of nontrivial repetition. However, a large part of the value of our exploration lies in the fact that the algebraic curves we have used would seem to provide a good basis for pursuing this.\par 
Notice, for instance, that a coefficient occurring six times simply corresponds to an integral intersection between two of our curves beyond a certain $x$-value (the degree of the higher degree curve). A multiplicity of eight corresponds to three curves intersecting at the same point, and so on. Furthermore, any set of the curves has at least some easily calculable number of these common intersections, because of the trivial integral points near the origin which they all share. These correspond to repetitions in the negative triangle. Each large-multiplicity integral intersection between these curves also corresponds to a large integral point on a curve of much lower degree; specifically, if $m$ curves with highest degree $n$ intersect at $(a,b)$, there is an integral point on a curve of degree at most $\frac{m}{n}$ with greater $x$ and $y$ coordinates than $(a-n,b)$.\par
The Singmaster [15] conjecture would be proved by bounding the number of these curves which can share a common intersection beyond a given $x$-value (although this statement is stronger than is necessary; the conjecture only considers integral intersections).\par
The na\"{i}ve way to do this would be to take a general set of some number of these curves, shift them left sufficiently far, and try to show via Nullstellensatz manipulations [13, p. 22] (generating other polynomials in the same ideal) that there could not be a common intersection in the first quadrant. The difficulty, of course, lies in working generally with curves of arbitrary complexity. It should be noted, however, that because of the fact that the Nullstellensatz deals with all intersections, not just integral ones, this strategy is not equivalent to simply manipulating general binomial coefficients themselves. Even if the task is still seemingly difficult, we are able to utilize a more powerful tool.\par
\bigskip \bigskip \bigskip \bigskip \bigskip \bigskip \bigskip \bigskip \bigskip \bigskip \bigskip \bigskip \bigskip \bigskip \bigskip \bigskip \bigskip \bigskip \bigskip \bigskip \bigskip \bigskip \bigskip \bigskip \bigskip \bigskip  \bigskip \bigskip \bigskip \bigskip \bigskip \bigskip \bigskip \bigskip \bigskip \bigskip \bigskip \bigskip \bigskip  
\begin{center} \begin{minipage}{.8\textwidth} {\small \emph{Figs. 1-3. Several of the curves we have considered. The first nontrivial intersection occurs between $a=104$, $b=1$, and $a=110$, $b=2$. It corresponds to ${120 \choose 1}={16 \choose 2}={10 \choose 3}$.}} \end{minipage} \end{center} 
\begin{center}
\includegraphics[width={4.3025in}]{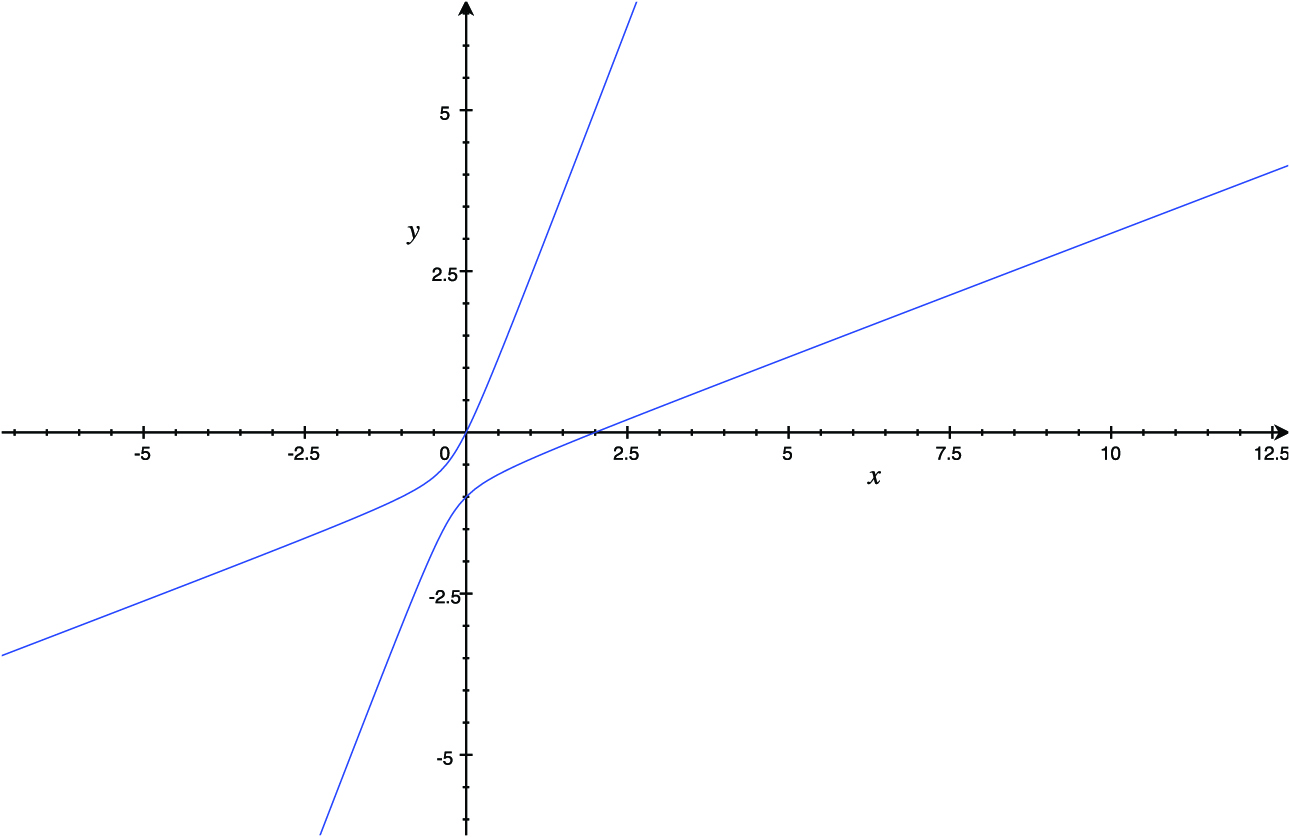}\\
{\small \emph{Fig. 1. Singmaster's curve: $a=1$, $b=1$\\ \hspace{0mm}\\ \hspace{0mm}}}\\
\bigskip
\includegraphics[width={4.3025in}]{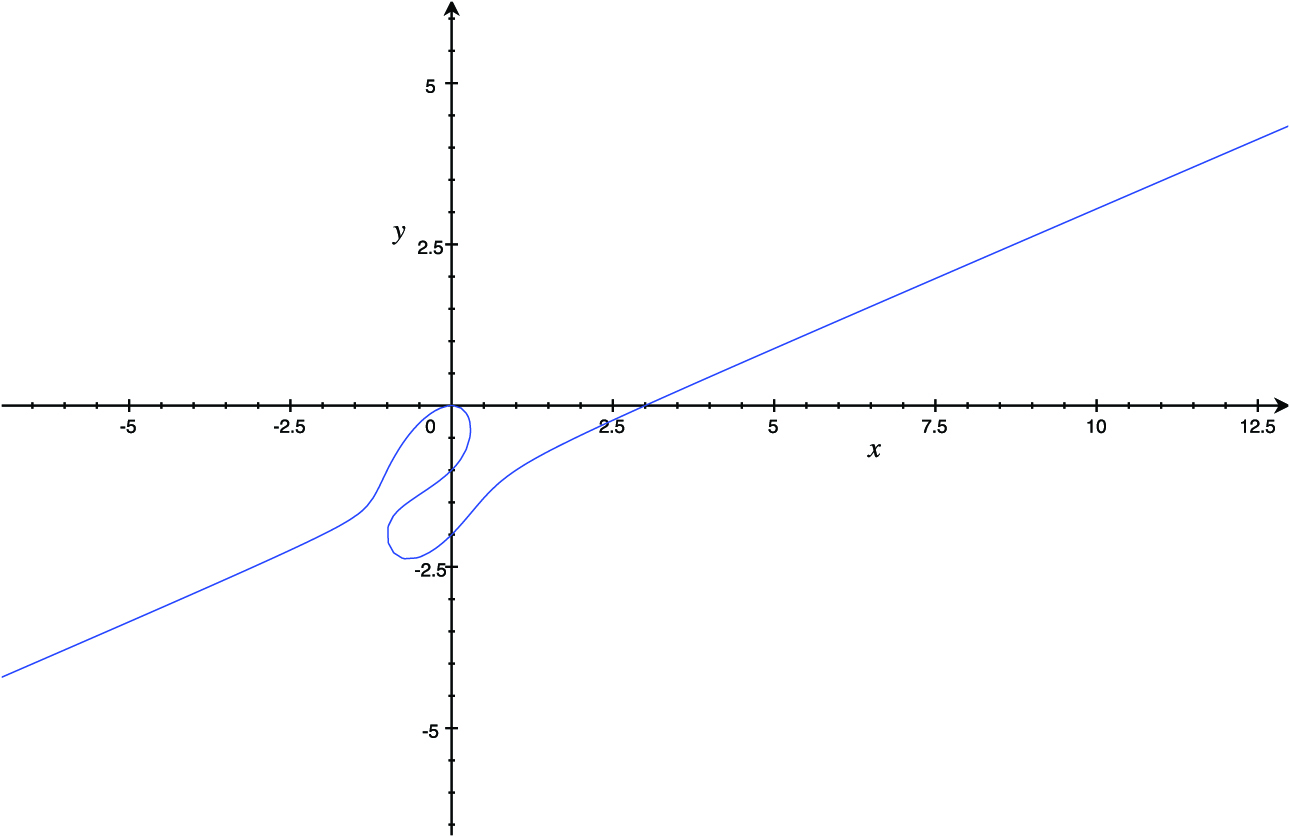}\\
{\small \emph{Fig. 2. $a=1$, $b=2$}}\\ \bigskip
\includegraphics[width={4.3025in}]{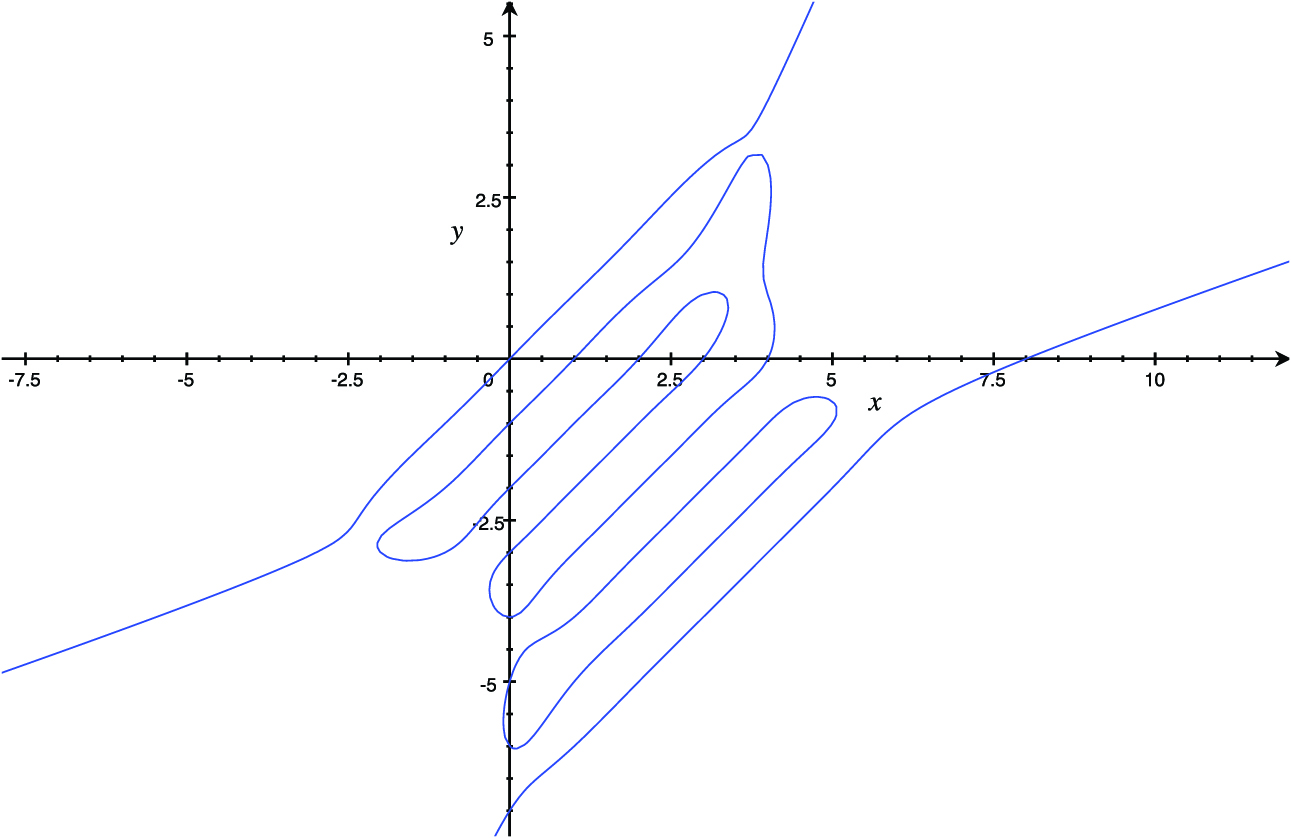}\\
{\small \emph{Fig. 3. $a=5$, $b=3$}} \end{center}
Advanced tools of algebraic geometry are also possibly applicable to this reformulation of the conjecture, although a major strengthening of current knowledge would certainly be necessary first. If a general effective form of Siegel's theorem [7, p. 353] were known, it would be possible to bound the ``height" of integral points on these curves (their coordinate size, essentially). However, the currently known effective methods for genus 1 curves, such as Baker's [3, p. 45] method, generate bounds too large (triple exponential) to be useful, even if they were generalized. More desirable (and more difficult) would be an effective Schmidt subspace theorem, as this would result in an effective form of a corollary [14, p. 5] on simultaneous approximation of algebraic numbers:\par
\begin{center}
\begin{minipage}{.8\textwidth}
Let $\alpha_1$, \dots $\alpha_n$ be algebraic numbers such that $1$, $\alpha_1$, \dots $\alpha_n$ are linearly independent over the rationals. Then for any $\epsilon>0$ there are only finitely many integers $p_1$, \dots $p_n$, $q$ with $q>0$ such that 
\begin{equation*}
\lvert \alpha_1-\frac{p_1}{q} \rvert<q^{-1-1/n-\epsilon}, \dots \lvert \alpha_n-\frac{p_n}{q} \rvert<q^{-1-1/n-\epsilon}.
\end{equation*}
\end{minipage} \end{center} \par \hspace{0mm} \\
\indent If we could find the ratios $\frac{p_i}{q}$ where the various algebraic numbers $\zeta$ associated with a set of our curves are simultaneously approximated, we could find the intersection point. Unfortunately, we have not yet provided a requirement that the approximations to $\zeta$ be as close as is dictated in the corollary.\par
The Singmaster conjecture remains as Paul Erd\H{o}s once described it [cited in 15, p. 385]: a ``very hard" problem. The intent here has been only to introduce a novel form for viewing repeated binomial coefficient problems to which the well-developed tools of 20th century mathematics are at least somewhat applicable. Whether this method can yield a truly new understanding of such an antique, basic, elementary part of mathematics, remains to be seen.
\bigskip
\section*{References}

\singlespacing \noindent[1] Abbott, H. L.; Erd\H{o}s, P.; Hanson, D. \emph{On the number of times an integer occurs as a binomial coefficient}. Amer. Math. Monthly {\bf 81} (1974): 256-261. MR0335283 (49 \#65)\\

\noindent[2] Avanesov, \`{E}. T. \emph{Solution of a problem on figurate numbers}. (Russian) Acta Arithm. {\bf 12} (1966/1967): 409-420. �MR0215784 (35 \#6619)\\

\noindent[3] Baker, A. \emph{Transcendental number theory}. Cambridge University Press, London-New York, 1975.� MR0422171 (54 \#10163)\\

\noindent[4] Bugeaud, Y.; Mignotte, M.; Siksek, S.; Stoll, M.; Tengely, Sz. \emph{Integral points on hyperelliptic curves}. Algebra \& Number Theory {\bf 2} (2008), no. 8: 859-885.� arXiv:0801.4459\\

\noindent[5] Coolidge, J. L. \emph{A treatise on algebraic plane curves}. Dover Publications, New York, 1959 [1931]. MR0120551 (22 \#11302)\\

\noindent[6] Hardy, G. H.; Wright, E. M. \emph{An introduction to the theory of numbers}. 3rd. edn., Oxford University Press, New York, 1954 [1938]. MR0067125\\

\noindent[7] Hindry, M.; Silverman, J. H. \emph{Diophantine geometry: an introduction}. Graduate Texts in Mathematics, {\bf 201}, Springer, New York, 2000.� MR1745599 (2001e:11058)\\

\noindent[8] Kane, D. M. \emph{New bounds on the number of representations of t as a binomial coefficient}. Integers: Electronic  J. of Combinatorial Number Theory {\bf 4} (2004), \#A07: 1-10. �MR2056013\\

\noindent[9] Kane, D. M. \emph{Improved bounds on the number of ways of expressing t as a binomial coefficient}. Integers: Electronic� J. of Combinatorial Number Theory {\bf 7} (2007), \#A53: 1-7.� MR2373115  (2008m:05014)\\

\noindent[10] Kiss, P. \emph{On the number of solutions of the Diophantine equation $\backslash$binom(x, p) = $\backslash$binom(y, 2)}. Fib. Quart. {\bf 26} (1988), no. 2: 127-129. �MR0938585 �(89f:11050)\\

\noindent[11] Lind, D. A. \emph{The quadratic field Q($\sqrt{5}$) and a certain Diophantine equation}. Fib. Quart. {\bf 6} (1968), no. 3: 86-93.� MR0231784 �(38 \#112)\\

\noindent[12] Nagell, T. \emph{Introduction to number theory}. 2nd edn., Chelsea Publishing Company, New York, 1964 [1951].\\

\noindent[13] Prasolov, V. V. \emph{Polynomials}, trans. D. Leites. Algorithms and Computation in Mathematics, {\bf 11}, Springer, Berlin, 2004. �MR2082772 (2005f:12001)\\

\noindent[14] Schlickewei, H. P. \emph{The mathematical work of Wolfgang Schmidt}. In Schlickewei, H. P.; Tichy, R. F.; Schmidt, K. D., eds. \emph{Diophantine approximation: festschrift for Wolfgang Schmidt}. Springer, Vienna-New York, 2000: 1-14.\\

\noindent[15] Singmaster, D. \emph{How often does an integer occur as a binomial coefficient?} Amer. Math. Monthly {\bf 78} (1971), no. 4: 385-386.� MR1536288\\

\noindent[16] Singmaster, D. \emph{Repeated binomial coefficients and Fibonacci numbers}. Fib. Quart. {\bf 13} (1975), no. 4: 295-298. MR0412095 (54 \#224)\\

\noindent[17] de Weger, B. M. M. \emph{Equal binomial coefficients: some elementary considerations}. J. Number Theory {\bf 63} (1997), no. 2: 373-386.� MR1443768 (98b:11027)

 \end{document}